\def\Hom{{\rm Hom}}
\def\K{\mathbb{K}} 
\def\phi{\varphi}
\def\g{\mathfrak g}
\def\F{\mathbb F}
\let\@@pmod\pmod
\DeclareRobustCommand{\pmod}{\@ifstar\@pmods\@@pmod}
\def\@pmods#1{\mkern4mu({\operator@font mod}\mkern 6mu#1)}
\newtheorem{theorem}{Theorem}[section]
\newtheorem{corollary}[theorem]{Corollary} \theoremstyle{remark}
\providecommand{\keywords}[1]{\noindent{Keywords:} #1}
\providecommand{\classify}[1]{\noindent{Mathematics Subject Classification:} #1}
\title{Restricted One-dimensional Central Extensions of Restricted
  Simple Lie Algebras}
\author{Tyler J. Evans\footnote{This paper was written during the first
    author's sabbatical leave at E\" otv\" os Lor\' and
  University in Budapest, Hungary.} \\Department of Mathematics \\Humboldt State
  University \\Arcata, CA 95521 USA \\evans@humboldt.edu \and Alice
  Fialowski \\Institutes of Mathematics \\University of P\' ecs and E\" otv\" os Lor\' and
  University \\H-7624 P\'ecs and H-1117 Budapest, Hungary\\ fialowsk@cs.elte.hu}
\date{}
\begin{document}
\maketitle

\keywords{restricted Lie algebra; central extension; cohomology;
  simple Lie algebra}

\classify{17B56, 17B50}

\section{Introduction}
In \cite{EvansFialowskiPenkava}, the authors give an explicit
description of the cocycles parameterizing the space of restricted
one-dimensional central extensions of the Witt algebra $W(1)=W(1,{\bf
  1})$ defined over fields of characteristic $p\ge 5$. The Witt
algebra is a finite dimensional restricted simple Lie algebra, and
such algebras have been completely classified for primes $p\ge 5$
\cite{StradeBook}. In this paper, we study the restricted
one-dimensional central extensions of an arbitrary finite dimensional
restricted simple Lie algebra for $p\ge 5$.

The one-dimensional central extensions of a Lie algebra $\g$ defined
over a field $\F$ are classified by the Lie algebra cohomology group
$H^2(\g)=H^2(\g,\F)$ where $\F$ is taken as a trivial $\g$-module. The
restricted one-dimensional central extensions of a restricted Lie
algebra $\g$ over $\F$ are likewise classified by the restricted Lie
algebra cohomology group $H^2_*(\g)=H^2_*(\g,\F)$. We refer the reader
to \cite{EvansFialowskiPenkava} and \cite{EvansFuchs2} for
descriptions of the complexes used to compute the ordinary and
restricted Lie algebra cohomology groups as well as a review of the
correspondence between one-dimensional central extensions (restricted
central extensions) and the second cohomology (restricted cohomology)
group. In particular, we adopt the notation and terminology in
\cite{EvansFialowskiPenkava}.

The dimensions of the ordinary cohomology groups $H^2(\g)$ for finite
dimensional simple restricted Lie algebras are known
\cite{BlockExt,Chiu,FarnsteinerCentral,FarnsteinerExt,PremetStradeMelikian}. Following
the technique used in \cite{VivianiResDef}, we use these results along
with Hochschild's six term exact sequence relating the first two
ordinary and restricted cohomology groups to analyze the restricted
cohomology group $H^2_*(\g)$. Our theorem states that if $\g$ is a
restricted simple Lie algebra, this sequence reduces to a short exact
sequence relating $H^2(\g)$ and $H^2_*(\g)$. In the case that
$H^2(\g)=0$, we explicitly describe the cocycles spanning
$H^2_*(\g)$. If $H^2(\g)\ne 0$, we give a procedure for describing a
basis for $H^2_*(\g)$.

The paper is organized as follows. Section 2 gives an overview of the
classification of finite dimensional simple restricted Lie algebras $\g$
defined over fields of characteristic $p\ge 5$. Section 3 contains the
statement and proof of the theorem relating $H^2(\g)$ and $H^2_*(\g)$
as well as an explicit description of the cocycles spanning
$H^2_*(\g)$ when $H^2(\g)=0$. Section 4 outlines a procedure for
describing a basis for $H^2_*(\g)$ in the case where $H^2(\g)\ne 0$.

\paragraph {\bf Acknowledgements.}
The present work is dedicated to the 650th anniversary of the
foundation of the University of P\' ecs, Hungary. 

The authors are
grateful for the suggestions of the referee as they greatly improved
the exposition.

\section{Restricted Simple Lie Algebras}

Finite dimensional simple Lie algebras over fields of characteristic
zero were classified more than a century ago in the work of Killing
and Cartan. The well known classification theorem states that in
characteristic zero, any simple Lie algebra is isomorphic to one of
the linear Lie algebras $A_l, B_l, C_l$ or $D_l$ ($l\ge 1$), or one of
the exceptional Lie algebras $E_6,E_7,E_8,F_4$ or $G_2$. Levi's
theorem implies that in characteristic 0, all extensions of a
semi-simple Lie algebra split, and hence all one-dimensional central
extensions of such an algebra are trivial.

The classification of modular simple Lie algebras over fields of
characteristic $p\ge 5$ was completed more recently in the work by Block,
Wilson, Premet and Strade in multiple papers spanning decades of
research. We refer the reader to \cite{StradeBook} for detailed
account of this work. The classification states that simple Lie
algebras of characteristic $p\ge 5$ fall into one of three types: {\it
  classical} Lie algebras, algebras of {\it Cartan type} and {\it
  Melikian algebras} (Melikian algebras are defined only for $p=5$).

The classical type simple Lie algebras are constructed by using a
Chevalley basis to construct a Lie algebra $L$ and
tensoring over $\F$ to yield a Lie algebra $L_\F$ over $\F$. The
algebras $L_\F$ are all restrictable, and they are all simple unless
$L_\F\simeq A_l$ where $p|(l+1)$. In this case $L_\F$ has a one
dimensional center $C$ and the quotient $\mathfrak {psl}(l+1)=L_\F/C$
is simple \cite{BlockExt,StradeBook}. Thus the simple restricted
algebras of classical type are:
\[A_l (p\not | (l+1)), \mathfrak{psl}(l+1) (p| (l+1)),
B_l,C_l,D_l,G_2, F_4,E_6,E_7,E_8.\] Block shows in \cite{BlockExt}
(Theorem 3.1) that if $\g$ is a simple modular Lie algebra of
classical type and $\g\not\simeq \mathfrak{psl}(l+1)$ where $p|(
l+1)$, then $\g$ has no non-trivial central extensions at all so that
in particular $H^2(\g)=0$. Moreover, the same theorem implies that if
$p|(l+1)$, any one-dimensional central extension of
$\mathfrak{psl}(l+1)$ is equivalent to the trivial one-dimensional
central extension so that $H^2(\mathfrak{psl}(l+1))=0$ as well.

Algebras of Cartan type were constructed by Kostrikin, Shafarevich
and Wilson and are divided into four families, called {\it
  Witt-Jacobson ($W(n,{\bf m})$), Special ($S(n,{\bf m}))$, Hamiltonian
  ($H(n,{\bf m})$)} and {\it Contact ($K(n,{\bf m})$) algebras}
\cite{StradeBook}. Unlike the classical type algebras, not all algebras
of Cartan type are restrictable. The restrictable simple
Lie algebras of Cartan type are $W(n)=W(n,{\bf 1}), S(n)=S(n,{\bf 1}),
H(n)=H(n,{\bf 1})$ and $K(n)=K(n,{\bf 1})$
\cite{StradeBook}. We refer the reader to \cite{VivianiResDef} for
explicit descriptions of these algebras. Some of these algebras have
non-trivial one-dimensional central extensions as Lie algebras
\cite{Chiu,FarnsteinerCentral,FarnsteinerExt}.

If $p=5$, there is one more family of simple modular Lie algebras
called {\it Melikian algebras} that were first introduced in
\cite{Melikian}. The only restrictable algebra in this family is the
algebra $M=M(1,1)$, and $M$ has no non-trivial one-dimensional central
extensions as a Lie algebra \cite{PremetStradeMelikian}. We refer the
reader to \cite{VivianiDefM} for an explicit description of this
algebra.

\section{Restricted One-dimensional Central Extensions}

If $\g$ is a restricted Lie algebra, there is a six-term exact
sequence in \cite{H} that relates the
ordinary and restricted cohomology groups in degrees one and two.
\begin{diagram}[LaTeXeqno]
  \label{sixterm}
  0 &\rTo &H^1_*(\g,M)&\rTo &H^1(\g,M)&\rTo&\Hom_{\mbox{\rm
      fr}}(\g,M^\g) & \rTo \\
  & \rTo & H^2_*(\g,M)&\rTo &H^2(\g,M)&\rTo&\Hom_{\mbox{\rm
      fr}}(\g,H^1(\g,M)) &
\end{diagram}
Here we use the same notation as in \cite{VivianiResDef} and write
$\Hom_{\mbox{\rm fr}}(V,W)$ for the set of {\it Frobenius
  homomorphisms} from the $\F$-vector space $V$ to the $\F$-vector
space $W$. That is
\[ \Hom_{\mbox{\rm fr}}(V,W) = \{ f:V\to W\ |\ f(\alpha x+ \beta y) =
\alpha^p f(x) + \beta^p f(y)\}\] for all $\alpha,\beta\in\F$ and
$x,y\in V$. We remark that $f\in \Hom_{\mbox{\rm fr}}(\g,\F)$ if and
only if $f$ is $p$-semilinear as defined in \cite{EvansFialowskiPenkava}.

\begin{theorem}
  If $\g$ is a restricted simple Lie algebra defined over a field $\F$
  of characteristic $p>0$, then as vector spaces over $\F$,
  \[H^2_*(\g)=H^2(\g)\oplus \Hom_{\mbox{\rm fr}}(\g,\F),\] so that
  \[\dim H^2_*(\g) = \dim H^2(\g) + \dim \g.\]
\end{theorem}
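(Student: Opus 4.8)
The plan is to specialize Hochschild's six-term exact sequence (\ref{sixterm}) to the trivial module $M=\F$ and to use simplicity to collapse it to a short exact sequence. First I would observe that since $\g$ is simple we have $[\g,\g]=\g$; because $\F$ is a trivial module the $1$-cocycles are exactly the linear maps $f\colon\g\to\F$ vanishing on $[\g,\g]$, and there are no non-trivial coboundaries, so $H^1(\g)=H^1(\g,\F)=0$. Feeding $M=\F$ into (\ref{sixterm}) I would then record two consequences: the rightmost term becomes $\Hom_{\mbox{\rm fr}}(\g,H^1(\g))=\Hom_{\mbox{\rm fr}}(\g,0)=0$, and since $\F^\g=\F$ the third term is $\Hom_{\mbox{\rm fr}}(\g,\F)$.

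Next I would extract the relevant segment. Exactness at $\Hom_{\mbox{\rm fr}}(\g,\F)$ together with $H^1(\g)=0$ shows that the map $\Hom_{\mbox{\rm fr}}(\g,\F)\to H^2_*(\g)$ is injective, while exactness at $H^2(\g)$ together with the vanishing of the rightmost term shows that $H^2_*(\g)\to H^2(\g)$ is surjective. Thus (\ref{sixterm}) reduces to the short exact sequence
\[0\to \Hom_{\mbox{\rm fr}}(\g,\F)\to H^2_*(\g)\to H^2(\g)\to 0.\]
Because every short exact sequence of $\F$-vector spaces splits, this yields the claimed direct-sum decomposition $H^2_*(\g)=H^2(\g)\oplus\Hom_{\mbox{\rm fr}}(\g,\F)$.

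Finally, for the dimension formula I would identify $\dim_\F\Hom_{\mbox{\rm fr}}(\g,\F)=\dim\g$. Fixing a basis $e_1,\dots,e_n$ of $\g$, a $p$-semilinear map $f$ is freely determined by its values $f(e_i)\in\F$ via $f(\sum_i\alpha_i e_i)=\sum_i\alpha_i^p f(e_i)$, and the assignment $f\mapsto(f(e_1),\dots,f(e_n))$ is an $\F$-linear isomorphism onto $\F^n$; hence the space of Frobenius homomorphisms has dimension $n=\dim\g$, giving $\dim H^2_*(\g)=\dim H^2(\g)+\dim\g$.

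I anticipate the main obstacle to be justifying the two vanishing claims cleanly. The vanishing $H^1(\g)=0$ is the real engine of the argument and must be pinned to simplicity via $[\g,\g]=\g$ rather than merely quoted; once that is in hand, the collapse of the six-term sequence and the splitting are formal. A secondary point requiring care is confirming that $\Hom_{\mbox{\rm fr}}(\g,\F)$ genuinely carries the $\F$-vector space structure under which the dimension count is valid, so that the direct sum and the dimension statement live in the same category.
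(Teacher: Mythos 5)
Your proposal is correct and follows essentially the same route as the paper: simplicity gives $[\g,\g]=\g$, hence $H^1(\g)=0$ with trivial coefficients, which collapses Hochschild's six-term sequence to the short exact sequence $0\to \Hom_{\mbox{\rm fr}}(\g,\F)\to H^2_*(\g)\to H^2(\g)\to 0$, and the splitting of vector-space sequences gives the decomposition. Your explicit verification that $\dim_\F\Hom_{\mbox{\rm fr}}(\g,\F)=\dim\g$ via evaluation on a basis is left implicit in the paper (it surfaces in Corollary \ref{resonly}), but it is a welcome addition rather than a deviation.
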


\begin{proof}
  With trivial coefficients, the ordinary coboundary map $\delta^1:C^1(\g)\to
  C^2(\g)$ reduces to
  \[\delta^1\phi(g_1,g_2)=\phi([g_1,g_2]).\]
  Since $\g$ is simple, this shows $H^1(\g)=0$, and it follows from
  (\ref{sixterm}) that $H^1_*(\g)=0$ as well. Moreover
  $\Hom_{\mbox{\rm fr}}(\g,H^1(\g,M))=\Hom_{\mbox{\rm fr}}(\g,0)=0$
  and $\F^\g=\F$. Therefore the sequence (\ref{sixterm}) reduces to a
  short exact sequence
  \begin{diagram}
    0 &\rTo&\Hom_{\mbox{\rm fr}}(\g,\F) & \rTo & H^2_*(\g)&\rTo
    &H^2(\g)&\rTo& 0
  \end{diagram}
\end{proof}

\begin{corollary}
  \label{resonly}
  If $\dim\g=n$, and $\{x_1,\dots, x_n\}$
  is a basis for $\g$, then there is a $n$-dimensional subspace of $H^2_*(\g)$ spanned by
  restricted cohomology classes represented by restricted cocycles
  $(0,\omega_i)$ ($i=1,\dots ,n$) where $\omega_i:\g\to\F$ is
  defined by
  \[\omega_i\left ( \sum_{j=1}^n \alpha_j x_j\right
  )=\alpha_j^p.\] Moreover, if $E_i$ denotes the one-dimensional
  restricted central extension of $\g$ determined by the cohomology
  class of the cocycle $(0,\omega_i)$, then $E_i=\g\oplus \F c$ as a
  $\F$-vector space, and we have for all $1\le
  j,k\le n$,
  \begin{align}
    \begin{split}
      \label{extensions}
      [x_j,x_k] & =[x_j,x_k]_\g;\\
      [x_j, c] & = 0;\\
      e_{j}^{[p]} & = x_j^{[p]_\g}+ \delta_{i,j} c;\\
      c^{[p]} & = 0,
    \end{split}
  \end{align}
  where $[x_j,x_k]_\g$ and $x_j^{[p]_\g}$ denote the Lie bracket and
  $[p]$-operation in $\g$ respectively, and $\delta$ denotes the
  Kronecker delta-function.
\end{corollary}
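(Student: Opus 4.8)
The plan is to derive both assertions from the short exact sequence
\[0 \to \Hom_{\mbox{\rm fr}}(\g,\F) \to H^2_*(\g) \to H^2(\g) \to 0\]
established in the Theorem, together with the correspondence between restricted $2$-cocycles and restricted one-dimensional central extensions set out in \cite{EvansFialowskiPenkava}. First I would observe that a Frobenius homomorphism $\omega\colon\g\to\F$ is completely determined by its values on the basis via $\omega(\sum_k\alpha_k x_k)=\sum_k\alpha_k^p\,\omega(x_k)$, and conversely that any assignment of these values defines such a homomorphism; hence $\Hom_{\mbox{\rm fr}}(\g,\F)$ is an $n$-dimensional $\F$-vector space. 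The maps $\omega_i$ satisfy $\omega_i(x_j)=\delta_{i,j}^p=\delta_{i,j}$ (since $\delta_{i,j}\in\{0,1\}$ and $0^p=0$, $1^p=1$), so they form the dual basis and in particular a basis of $\Hom_{\mbox{\rm fr}}(\g,\F)$. Next I would use that the injection $\Hom_{\mbox{\rm fr}}(\g,\F)\hookrightarrow H^2_*(\g)$ appearing in the sequence is the $\F$-linear map $\omega\mapsto[(0,\omega)]$; being injective, it carries the basis $\{\omega_1,\dots,\omega_n\}$ to linearly independent classes $[(0,\omega_i)]$, which therefore span an $n$-dimensional subspace of $H^2_*(\g)$. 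This gives the first assertion.

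For the structure of $E_i$, the plan is to substitute the cocycle $(\phi,\omega)=(0,\omega_i)$ into the general recipe attaching to a restricted $2$-cocycle the extension $E=\g\oplus\F c$. The bracket on $E$ is $[x,y]_\g+\phi(x,y)c$ with $c$ central; taking $\phi=0$ yields the first two lines of (\ref{extensions}). The $[p]$-operation on $E$ lifts that of $\g$, with $\omega$ supplying the central correction, so that the lift $e_j$ of $x_j$ satisfies $e_j^{[p]}=x_j^{[p]_\g}+\omega_i(x_j)c=x_j^{[p]_\g}+\delta_{i,j}c$, while the trivial restricted module structure carried by the central line $\F c$ forces $c^{[p]}=0$; these are the last two lines of (\ref{extensions}).

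The one point genuinely requiring care is that the prescriptions above define a bona fide restricted Lie algebra structure on $E_i$ — in particular that the chosen $[p]$-map is compatible with the bracket through $\ad(z^{[p]})=(\ad z)^p$ and with addition through the Jacobson relations. I expect this to be the main, if routine, obstacle; however it needs no separate verification here, since it is exactly the content of $(0,\omega_i)$ being a restricted $2$-cocycle, a fact already built into the well-definedness of the map $\Hom_{\mbox{\rm fr}}(\g,\F)\to H^2_*(\g)$ invoked above. Thus the structure equations (\ref{extensions}) follow.
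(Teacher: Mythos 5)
Your proposal is correct and follows essentially the same route as the paper: identify the $\omega_i$ as a basis of $\Hom_{\mbox{\rm fr}}(\g,\F)$, pass to the classes of $(0,\omega_i)$ in $H^2_*(\g)$, and read off the structure of $E_i$ from the standard bracket/$[p]$-operation formulas attached to a restricted $2$-cocycle in \cite{EvansFialowskiPenkava}. The only cosmetic difference is that you deduce linear independence and the cocycle condition from the injectivity and well-definedness of the map $\Hom_{\mbox{\rm fr}}(\g,\F)\to H^2_*(\g)$ in the short exact sequence, where the paper cites these as direct ``easy computations.''
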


\begin{proof}
  Easy computations show the maps $\omega_i$ are a basis for
  $\Hom_{\mbox{\rm fr}}(\g,\F)$, $(0,\omega_i)$ is a cocycle for all
  $i$, and the corresponding cohomology classes are
  linearly independent in $H^2_*(\g)$. The Lie bracket and
  $[p]$-operation in $E_i$ are given by the equations (4) in \cite{EvansFialowskiPenkava}.
\end{proof}

\section{Case By Case Study}
In this section, all restricted Lie algebras are defined over fields
of characteristic $p\ge 5$. Corollary \ref{resonly} implies if $H^2(\g)=0$, then the extensions in
(\ref{extensions}) are the only restricted one-dimensional central
extensions of a restricted simple Lie algebra $\g$. Table
\ref{table:zero} lists the restricted simple Lie algebras $\g$ for
which $H^2(\g)=0$ along with the dimensions of these algebras
(i.e. the dimensions of the space of restricted one-dimensional
central extensions). This table contains all the simple restricted Lie
algebras of classical type \cite{BlockExt}, the Witt algebras $W(n)$
for $n>1$ \cite{FarnsteinerExt}, the Contact algebras $K(n)$ for
$n+3\not\equiv 0\pmod p$ \cite{FarnsteinerExt} and the Melikian
algebra $M$ \cite{PremetStradeMelikian}. For each of these algebras,
we have explicit descriptions (\ref{extensions}) of all restricted
one-dimensional central extensions.
\begin{table}[H]
  \resizebox{.9\textwidth}{!}{\begin{minipage}{\textwidth}
      \begin{tabular}{lr}
        & \\
        $\g$ & $\dim\g=\dim H^2_*(\g)$\\
        \hline \hline
        $A_l$ ($l\ge 1$, $p\not | (l+1)$) & $(l+1)^2-1$ \\
        $\mathfrak{psl}(l+1)$ ($l\ge 1$, $p | (l+1)$) & $(l+1)^2-2$ \\
        $B_l$  ($l\ge 2$) &  $2l^2+l$\\
        $C_l$ ($l\ge 3$)   & $2l^2+l$ \\ 
        $D_l$ ($l\ge 4$)   &  $2l^2-l$\\
        $G_2$  &  14 \\
        $F_4$  & 52 \\
        $E_6$  & 78 \\
        $E_7$  & 133 \\
        $E_8$  &  248\\
        $W(n)$ ($n>1$) & $np^n$\\
        $K(n)$ ($n+3\not\equiv 0\pmod p$) & $p^n$\\
        $M$ & 125\\
      \end{tabular}
      \caption[Table caption text]{$\dim H^2_*(\g)$ for $\g$ for which
        $H^2(\g)=0$}
      \label{table:zero}
    \end{minipage} }
\end{table}
Table \ref{table:nonzero} lists the restricted simple Lie algebras
$\g$ for which $H^2(\g)\ne 0$ along with the dimensions of these
algebras, the dimensions of the ordinary cohomology groups and the
dimensions of the space of restricted one-dimensional central
extensions. This table includes the Witt algebra $W(1)$
\cite{EvansFialowskiPenkava,FarnsteinerExt}, the Special algebras
$S(n)$ \cite{Chiu}, Hamiltonian algebras $H(n)$ \cite{Chiu}, and the
Contact algebras $K(n)$ for $n+3\equiv 0\pmod p$
\cite{FarnsteinerExt}.

\begin{table}[H]
  \resizebox{.9\textwidth}{!}{\begin{minipage}{\textwidth}
      \begin{tabular}{lrrr}
        & \\
        $\g$ & $\dim\g$ & $\dim H^2(\g)$ & $\dim H^2_*(\g)$\\
        \hline \hline
        $W(1)$ & $p$ & 1 & $p+1$ \\
        $S(3)$ & $2p^3-2$ & 3 & $2p^3+1$\\
        $S(n)$ ($n>3$)& $(n-1)(p^n-1)$ & $n(n-1)/2$ & $(n-1)(2p^n+n-2)/2$\\
        $H(n)$ ($n+4\equiv 0\pmod p$) & $p^n-2$ & $n+2$ & $p^n+n$\\
        $H(n)$ ($n+4\not\equiv 0\pmod p$) & $p^n-2$ & $n+1$& $p^n+n-1$\\
        $K(n)$ ($n+3\equiv 0\pmod p$) & $p^n-1$ & $n+1$& $p^n+n$
      \end{tabular}
      \caption[Table caption text]{$\dim H^2_*(\g)$ for $\g$ for which
        $H^2(\g)\ne 0$}
      \label{table:nonzero}
    \end{minipage} }
\end{table}
For $W(1)$, the non-trivial ordinary cohomology class determines a
restricted one-dimensional central extension $E=W(1)\oplus \K c$ where
we have for all $-1\le j,k\le p-2$,
\begin{align*}
  [e_j,e_k] & =(k-j) e_{j+k}+\frac{j(j^2-4)}{3}\delta_{0,j+k} c;\\
  [e_j, c] & = 0;\\
  e_{j}^{[p]} & = \delta_{0,j}e_0;\\
  c^{[p]} & = 0
\end{align*}
and $\{e_{-1},\dots,e_{p-2}\}$ is a basis for $W(1)$
\cite{EvansFialowskiPenkava}.  \medskip

Finally, we remark that in \cite{VivianiResDef}, the map
\begin{diagram}
  H^2(\g,M)&\rTo^\Delta&\Hom_{\mbox{\rm fr}}(\g,H^1(\g,M))
\end{diagram}
from the exact sequence (\ref{sixterm}) is given explicitly. In the
case of trivial coefficients, this map $\Delta$ is given by
\[\Delta_\phi(g)\cdot h = \phi(g,h^{[p]})-\phi([g,
\underbrace{h,\cdots,h}_{p-1}], h)\] where $\phi\in C^2(\g)$ and
$g,h\in\g$.  Since $\g$ is simple, $H^1(\g)=0$ and hence $\Delta$ is
the zero map so that
\[\phi(g,h^{[p]})-\phi([g,
\underbrace{h,\cdots,h}_{p-1}], h)=0\] for all $\phi\in C^2(\g)$ and
$g,h\in\g$. Therefore if $\phi\in C^2(\g)$ is a cocycle and $\omega:\g
\to\F$ has the $*$-property with respect to $\phi$ (as defined in
\cite{EvansFialowskiPenkava}), then $(\phi,\omega)\in C^2_*(\g)$ is a
restricted cocycle. Moreover, given $\phi\in C^2(\g)$, we can set the
value of $\omega$ on every basis vector for $\g$ to be 0 and use
equation (1) in \cite{EvansFialowskiPenkava} to define $\omega$ to
have the $*$-property with respect to $\phi$. If $\phi_1$ and $\phi_2$
are linearly independent cocycles in $C^2(\g)$, the resulting
restricted cocycles $(\phi_1,\omega_1)$ and $(\phi_2,\omega_2)$ are
linearly independent in $C^2_*(\g)$. This gives us a canonical process
of producing a basis for $H^2_*(\g)$ from a basis for $H^2(\g)$ and
the cocycles $(0,\omega_i)$ in Corollary \ref{resonly}. This process
was used to produce the basis for $H^2_*(W(1))$ in
\cite{EvansFialowskiPenkava}.

\bibliography{references}{} \bibliographystyle{plain}
\end{document}